 \tikzstyle{vertex}=[circle,fill=black,inner sep=0pt]
\tikzstyle{edge} = [draw,thick]
\tikzstyle{weight} = [font=\small,fill=white, inner sep=0pt, opacity=0.0, text opacity=1]
 \date{ }
 \newtheorem{theorem}{Theorem}
 \newtheorem{prop}[theorem]{Proposition}
\newtheoremstyle{mytheoremstyle1} % name
        {\topsep}                    % Space above
        {\topsep}                    % Space below
        {}                   % Body font
        {}                           % Indent amount
        {\fontfamily{thm}\selectfont\scshape\color{black}}                   % Theorem head font
        {.}                          % Punctuation after theorem head
        {.5em}                       % Space after theorem head
        {}  % Theorem head spec (can be left empty, meaning ‘normal’)
\theoremstyle{mytheoremstyle1}
 \newtheorem{case}{Case}
 \tikzstyle{every node}=[circle, draw, fill=black,inner sep=0pt, minimum width=4pt]
 \numberwithin{subcase}{case}
 \theoremstyle{definition}
 \newtheorem{remark}{Remark}
\title{\bf \vskip 3cm Tight Bounds on the Chromatic Edge Stability Index of Graphs}
\author[1]{Saieed Akbari\footnote{s\_akbari@sharif.edu}}
\author[2]{John Haslegrave\footnote{j.haslegrave@lancaster.ac.uk}}
\author[1]{Mehrbod Javadi\footnote{mehrbod.javadi@sharif.edu}}
\author[1]{Nasim Nahvi\footnote{nasim.nahvi1998@gmail.com}}
\author[1]{Helia Niaparast\footnote{h.niaparast@sharif.edu}}
\affil[1]{\small Department of Mathematical Sciences, Sharif University of Technology, Tehran, Iran}
\affil[2]{\small Department of Mathematics and Statistics, Lancaster University, UK}
\date{}							% Activate to display a given date or no date
\begin{document}
\maketitle
\begin{sloppypar}

\begin{abstract}

The chromatic edge stability index $\mathrm{es}_{\chi'}(G)$ of a graph $G$ is the minimum number of edges whose removal results in a graph with smaller chromatic index. We give best-possible upper bounds on $\mathrm{es}_{\chi'}(G)$ in terms of the number of vertices of degree $\Delta(G)$ (if $G$ is Class 2), and the numbers of vertices of degree $\Delta(G)$ and ${\Delta(G)-1}$ (if $G$ is Class 1). If $G$ is bipartite we give an exact expression for $\mathrm{es}_{\chi'}(G)$ involving the maximum size of a matching in the subgraph induced by the vertices of degree $\Delta(G)$. Finally, we consider whether a minimum mitigating set, that is a set of size $\mathrm{es}_{\chi'}(G)$ whose removal reduces the chromatic index, has the property that every edge meets a vertex of degree at least $\Delta(G)-1$; we prove that this is true for some minimum mitigating set of $G$, but not necessarily for every minimum mitigating set of $G$.

\end{abstract}
\vspace{0.2cm}
{\footnotesize{2010 Mathematics Subject Classification: 05C15, 05C70
\\
Key words: Edge coloring, chromatic index, chromatic edge stability index}}

\section {Introduction}
Throughout this paper all graphs have at least one edge and are simple, that is, with no loops or multiple edges. Let $G$ be a graph with vertex set ${V}(G)$ and edge set ${E}(G)$.
The degree of a vertex $v$ is denoted by $d_G(v)$.
The {\it core} of $G$, denoted by $G_\Delta$, is the subgraph of $G$ induced by the vertices of degree $\Delta(G)$, where $\Delta(G)$ is the maximum degree of $G$.
We denote the number of vertices of degree $i$ by $t_i$.
The graph $G$ is called {\it acyclic} if $G$ does not contain any cycles.
If $S \subseteq V(G)$, then the subgraph of $G$ induced by $S$ is denoted by $G[S]$.
Similarly, if $S' \subseteq E(G)$, then the subgraph of $G$ induced by $S'$ is denoted by $G[S']$.

A {\it proper edge coloring} (resp.\ vertex coloring) assigns colors to the edges (resp.\ vertices) of a graph so that no two incident edges (resp.\ adjacent vertices) have the same color.
The {\it chromatic index} of $G$, $\chi^\prime(G)$ is the minimum number of colors required for a proper edge coloring of $G$.
The {\it chromatic number} of a graph $G$, $\chi(G)$ is similarly defined as the minimum number of colors required for a proper vertex coloring of $G$.
A {\it matching} $M$ in $G$ is a set of pairwise non-incident edges.
The {\it matching number} $\alpha^\prime(G)$ of a graph $G$ is the size of a maximum matching.

The {\it chromatic edge stability number} $\mathrm{es}_{\chi}(G)$ of a graph $G$ is the minimum number of edges whose removal results in a graph $H$ with $\chi(H)=\chi(G)-1$. This concept was first introduced in the 1980's by Staton \cite{S}, and some inequalities and sharp bounds on $\mathrm{es}_\chi$ were proved in \cite{AKMN}. Similarly, the {\it chromatic vertex stability number} $vs_\chi(G)$ of a graph $G$, defined as the minimum number of vertices whose removal results in a graph $H$ with $\chi(H)=\chi(G)-1$, was first studied in \cite{ABKM}. 

A {\it mitigating set} is a set of edges $F$ in $G$ such that
$\chi^\prime(G \setminus F) < \chi^\prime(G)$.

In this paper, we focus on the {\it chromatic edge stability index} of $G$ denoted by $\mathrm{es}_{\chi^\prime}(G)$, defined as the minimum number of edges whose removal results in a graph $H$ with ${\chi^\prime(H)=\chi^\prime(G)-1}$. This concept was first studied in \cite{ABBGHM}.

As an example, consider the {\it Petersen graph}, $P$, in which $\chi^\prime(P)=4$, see  \cite[p.~7]{SSTF}. As was proved in \cite[p.~16]{SSTF}, removing one vertex of $P$ does not reduce the chromatic index. Hence, the removal of one edge cannot reduce the chromatic index. However, one can reduce the chromatic index by removing two edges, as illustrated in Figure \ref{fig:Petersen}. Hence, $\mathrm{es}_{\chi^\prime}(P) = 2$.

\begin{figure}\centering
\begin{tikzpicture}[thick]

    \foreach \pos/\name in {{(18:2)/a}, {(90:2)/b}, {(162:2)/c},
	{(234:2)/d}, {(306:2)/e}, 
	{(18:1)/f}, {(90:1)/g}, {(162:1)/h},
	{(234:1)/i}, {(306:1)/j}}
\node[vertex] (\name) at \pos {};
\draw (a) -- node[pos=0.5,fill=white,draw=white,rectangle] {$a$} (b); 
\draw (b) -- node[pos=0.5,fill=white,draw=white,rectangle] {$b$} (c); 
\draw (c) -- node[pos=0.5,fill=white,draw=white,rectangle] {$a$} (d); 
\draw (d) -- node[pos=0.5,fill=white,draw=white,rectangle] {$c$} (e); 
\draw (e) -- node[pos=0.5,fill=white,draw=white,rectangle] {$b$} (a); 

\draw (f) -- node[pos=0.5,fill=white,draw=white,rectangle] {$a$} (h); 
\draw (i) -- node[pos=0.3,fill=white,draw=white,rectangle] {$a$} (g); 
\draw (g) -- node[pos=0.7,fill=white,draw=white,rectangle] {$b$} (j); 

\draw (b) -- node[pos=0.5,fill=white,draw=white,rectangle] {$c$} (g); 
\draw (a) -- node[pos=0.5,fill=white,draw=white,rectangle] {$c$} (f); 
\draw (e) -- node[pos=0.5,fill=white,draw=white,rectangle] {$a$} (j); 
\draw (i) -- node[pos=0.5,fill=white,draw=white,rectangle] {$b$} (d); 
\draw (h) -- node[pos=0.5,fill=white,draw=white,rectangle] {$c$} (c); 
\end{tikzpicture}
\caption{A $3$-edge-colorable graph obtained by removing two edges from $P$.}\label{fig:Petersen}
\end{figure}
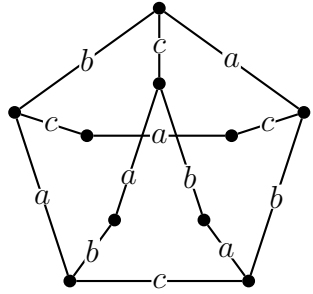

We now recall Vizing's Theorem \cite{V}.
\begin{theorem}\cite{V}
For a graph $G$, ${\Delta(G)\leq\chi^\prime(G)\leq\Delta(G)+1}$.\label{Vizing}
\end{theorem}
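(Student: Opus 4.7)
The plan is to prove the two inequalities separately. The lower bound $\Delta(G) \leq \chi'(G)$ is immediate: a vertex of maximum degree is incident to $\Delta(G)$ pairwise adjacent edges, and any proper edge coloring must assign them pairwise distinct colors. The real content lies in the upper bound $\chi'(G) \leq \Delta(G) + 1$, for which I would invoke the classical Vizing fan construction together with induction on $|E(G)|$.

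Write $\Delta = \Delta(G)$. For the inductive step, delete an edge $uv_1$ and apply induction to obtain a proper edge coloring of $G - uv_1$ using $\Delta + 1$ colors. For each vertex $w$, let $M(w)$ be the set of colors missing at $w$ in this coloring; since $d_G(w) \leq \Delta$ while $\Delta + 1$ colors are available, each $M(w)$ is non-empty. If $M(u) \cap M(v_1) \neq \emptyset$, I color $uv_1$ with a shared missing color and finish. Otherwise I build a Vizing fan: a maximal sequence $v_1, v_2, \ldots, v_k$ of distinct neighbors of $u$, together with colors $\alpha_i \in M(v_i)$, such that the edge $uv_{i+1}$ has color $\alpha_i$ for each $i < k$. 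Maximality forces one of two outcomes: either (i) $\alpha_k \in M(u)$, or (ii) the edge at $u$ of color $\alpha_k$ leads back to some $v_j$ already on the fan.

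In case (i) I would rotate the fan, recoloring $uv_i$ with $\alpha_i$ for each $i = 1, \ldots, k$. Validity at each $v_i$ follows from $\alpha_i \in M(v_i)$, validity at $u$ follows from the fan definition together with $\alpha_k \in M(u)$, and in particular $uv_1$ is now properly colored with $\alpha_1$. Case (ii) is the delicate part and requires a Kempe chain argument: pick $\beta \in M(u)$, which by hypothesis lies in neither $M(v_j)$ nor $M(v_k)$, so $\beta$ appears at both $v_j$ and $v_k$. I then consider the maximal $\{\beta,\alpha_k\}$-alternating paths starting at $v_j$ and at $v_k$. At most one of these paths can terminate at $u$; by swapping colors along whichever path avoids $u$, the resulting coloring reduces case (ii) to case (i) applied to a suitable truncation of the fan.

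I expect the main obstacle to be verifying case (ii) cleanly: one must argue that the two Kempe chains cannot both reach $u$, that at least one of the two swaps preserves the properness of the coloring everywhere outside the fan, and that after the swap the truncated fan still satisfies the defining fan property (so that the case-(i) rotation can be legitimately invoked). Once this combinatorial bookkeeping is carried out carefully, the remaining steps are routine.
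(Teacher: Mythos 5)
The paper does not prove this statement: it is Vizing's theorem, quoted with a citation to Vizing's original article, so there is no in-paper argument to compare yours against. Your sketch is the standard fan-recoloring proof and is essentially correct: the lower bound argument is fine, and the upper bound via induction on $|E(G)|$, the Vizing fan, the rotation in case (i), and the Kempe-chain swap in case (ii) are all the classical ingredients. The points you flag as delicate are exactly the ones that need care (the two $\{\beta,\alpha_k\}$-paths from $v_j$ and $v_k$ are endpoints of a single alternating path or of two disjoint ones, so at most one can end at $u$; and after swapping along the path avoiding $u$, the colors $\beta$ and $\alpha_k$ are arranged so that a truncated fan rotation ending in a $\beta$-coloring of the last edge is legal). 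One small indexing slip: in case (ii) the edge at $u$ colored $\alpha_k$ is $uv_{j+1}$ with $\alpha_k=\alpha_j$, so $\alpha_k$ is missing at $v_j$ and $v_k$, which is what your chain argument actually uses. It is worth noting that the paper does deploy precisely this fan-and-Kempe-chain machinery, not here but in its Theorem~7 on the structure of minimum mitigating sets, so your outline matches the technique the authors use elsewhere.
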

A graph $G$ is {\it Class $1$} if ${\chi^\prime(G)=\Delta(G)}$ and it is {\it Class $2$} if ${\chi^\prime(G)=\Delta(G)+1}$.
The following theorem was first stated by Fournier \cite{F}.
\begin{theorem}\cite{F}\label{acyclic class 1}
Let $G$ be a graph. If $G_\Delta$ is acyclic, then $G$ is Class 1.
\end{theorem}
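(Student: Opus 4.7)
The plan is to prove this by contradiction using Vizing's Adjacency Lemma, which states that in any critical Class~2 graph and for every edge $uv$, the vertex $v$ has at least $\Delta(G)-d_G(u)+1$ neighbors of maximum degree distinct from $u$. I would suppose there is a counterexample and choose one, $G$, with $|E(G)|$ minimum, so that $G$ is Class~2 and $G_\Delta$ is acyclic.

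First I would reduce to the edge-critical case, that is, show that $\chi'(G-e)<\chi'(G)$ for every edge $e$. Otherwise some edge $e$ would satisfy $\chi'(G-e)=\Delta(G)+1$, and applying Vizing's Theorem to $G-e$ would force $\Delta(G-e)\ge\Delta(G)$, hence $\Delta(G-e)=\Delta(G)$. But then the vertices of maximum degree in $G-e$ are precisely those of $V(G_\Delta)$ that are not endpoints of $e$, so $(G-e)_\Delta$ is an induced subgraph of $G_\Delta$ and is therefore still acyclic. This would give a smaller counterexample, contradicting the choice of $G$.

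Assuming now that $G$ is critical, I would pick any vertex $v\in V(G_\Delta)$ and any edge $vw\in E(G)$. Vizing's Adjacency Lemma then guarantees that $v$ has at least $\Delta(G)-d_G(w)+1$ neighbors of maximum degree different from $w$. If $d_G(w)=\Delta(G)$, then together with $w$ itself this gives $v$ at least two neighbors of maximum degree; if $d_G(w)<\Delta(G)$, the bound already produces at least two such neighbors distinct from $w$. Either way $d_{G_\Delta}(v)\ge 2$ for every $v\in V(G_\Delta)$, so $G_\Delta$ has minimum degree at least~$2$ and must contain a cycle, contradicting the acyclicity of $G_\Delta$.

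The main obstacle is the reduction step, since one must verify that removing an edge which does not strictly decrease the chromatic index preserves acyclicity of the core. The key point is that deleting an edge can only remove vertices from the core, never add new ones, provided the maximum degree is unchanged, and Vizing's upper bound on $\chi'$ rules out the alternative in which $\Delta$ drops while $\chi'(G-e)$ remains equal to $\Delta(G)+1$. Once criticality is established, Vizing's Adjacency Lemma does essentially all the work.
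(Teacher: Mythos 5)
Your argument is correct. Note, however, that the paper does not prove this statement at all: it is quoted from Fournier's 1973 paper as a known result, so there is no internal proof to compare against. Your derivation is the standard modern route: reduce a minimum counterexample to an edge-critical Class~2 graph (using Vizing's upper bound to see that $\Delta$ cannot drop when $\chi'$ stays at $\Delta(G)+1$, and that deleting an edge only removes vertices from the core, so acyclicity is preserved), then apply Vizing's Adjacency Lemma to conclude that every vertex of a critical graph has at least two neighbors of maximum degree, forcing $\delta(G_\Delta)\geq 2$ and hence a cycle in $G_\Delta$. Both halves check out; the only cosmetic caveat is that VAL is usually stated for connected critical graphs, so you should either pass to the (unique) component carrying all the edges after the criticality reduction, or observe that edge-criticality already forces all edges into one component. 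Fournier's original proof avoids VAL and instead runs a direct induction with a Vizing-fan recoloring at a vertex that is a leaf or isolated vertex of the core; your version trades that hands-on recoloring for the heavier but standard adjacency lemma, which is a perfectly acceptable exchange.
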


\begin{theorem}\cite{CH}\label{core size 3}
Let $G$ be a connected graph with ${|G_\Delta| = 3}$. Then $G$ is Class 2 if and only if for some integer $n$, $G$ is obtained from $K_{2n+1}$ by removing $n-1$ independent edges.
\end{theorem}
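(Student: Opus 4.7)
The plan is to handle the two directions separately. For sufficiency, let $G=K_{2n+1}\setminus M$ where $M$ is a matching of size $n-1$. Since $|V(G)|=2n+1$ is odd, every matching in $G$ has size at most $n$, while $|E(G)|=\binom{2n+1}{2}-(n-1)=2n^2+1$, so $\chi^\prime(G)\ge\lceil(2n^2+1)/n\rceil=2n+1=\Delta(G)+1$. Hence $G$ is Class~2, and the three vertices uncovered by $M$ are precisely the vertices of maximum degree.

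For necessity, suppose $G$ is connected and Class~2 with $|G_\Delta|=3$. By Theorem~\ref{acyclic class 1} (Fournier), $G_\Delta$ contains a cycle, and since $|G_\Delta|=3$ it must be a triangle on some set $S=\{u,v,w\}$. Pass to a $\chi^\prime$-critical subgraph $H\subseteq G$ with $\chi^\prime(H)=\chi^\prime(G)=\Delta+1$, where $\Delta:=\Delta(G)$; Vizing's bound gives $\Delta(H)=\Delta$, and applying Theorem~\ref{acyclic class 1} to $H$ together with the inclusion $H_\Delta\subseteq G_\Delta$ shows $H_\Delta=G_\Delta=S$. Now invoke Vizing's Adjacency Lemma (VAL) inside $H$. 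Since each of $u,v,w$ has only the other two as $\Delta$-neighbours, VAL applied to an edge $ux$ with $u\in S$ and $x\in T:=V(H)\setminus S$ gives $\Delta-d_H(x)+1\le 2$, so $d_H(x)\ge\Delta-1$. For $x\in T$ with no neighbour in $S$, applying VAL to any edge $xy$ with $y\in T$ would demand that $x$ have at least two $\Delta$-neighbours, which is impossible since $H_\Delta\subseteq S$; hence every $x\in T$ is adjacent to $S$ and has $d_H(x)=\Delta-1$. A parity check on $2|E(H)|=3\Delta+|T|(\Delta-1)$ then forces $\Delta=2n$ to be even.

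The technical heart is the structural claim that $H$ is $K_{2n+1}$ minus an $(n-1)$-matching. A further application of VAL to an edge inside $T$ yields $d_S(x)\ge 2$ for every $x\in T$ with a neighbour in $T$; promoting this to $d_S(x)=3$ for every $x\in T$ requires a Vizing-fan / Kempe-chain argument, since VAL alone is too weak. Namely, if some $x\in T$ were non-adjacent to one of $u,v,w$, then starting from a proper $\Delta$-edge-colouring of $H-e$ for an appropriate edge $e$ incident to $x$, one may rotate colours along a fan at $x$ and swap a Kempe chain to extend the colouring to all of $H$, contradicting $\chi^\prime(H)=\Delta+1$. Once $d_S(x)=3$ for all $x\in T$, summing the $S$-to-$T$ edges gives $|T|=\Delta-2=2n-2$; each $x\in T$ then has $2n-4$ neighbours in $T$ and therefore exactly one non-neighbour in $T$, and these non-edges form a perfect matching of $T$ of size $n-1$. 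Thus $H$ has the claimed form.

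Finally, $G=H$: every vertex of $H$ already attains its maximum possible $G$-degree ($\Delta$ for $u,v,w$ and $\Delta-1$ for vertices of $T$, since $T\cap G_\Delta=\emptyset$), so $G$ has no edge incident to $V(H)$ outside $E(H)$; connectivity of $G$ then forces $V(G)=V(H)$ and hence $G=H$. The principal obstacle is the fan/Kempe step promoting $d_S(x)\ge 2$ to $d_S(x)=3$: VAL alone stops at~$2$, and closing this gap is where the real colouring work lies.
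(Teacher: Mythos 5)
The paper does not prove this statement; it is imported verbatim from Chetwynd and Hilton \cite{CH}, so there is no in-paper argument to compare yours against and I can only assess your proposal on its own terms.

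Your sufficiency direction is complete and correct: with $|E(G)|=2n^2+1$ and every colour class of size at most $n$ on an odd number of vertices, the counting bound forces $\chi'(G)\geq 2n+1=\Delta(G)+1$. The necessity direction has a sound skeleton (pass to a critical subgraph $H$, use Fournier's theorem to pin down $H_\Delta=G_\Delta=S$ as a triangle, use Vizing's Adjacency Lemma to get $d_H(x)=\Delta-1$ for all $x\notin S$, and a parity count to get $\Delta=2n$), and the final bookkeeping from ``every $x\in T$ has all three of $u,v,w$ as neighbours'' down to ``$H=K_{2n+1}$ minus an $(n-1)$-matching'' and then $G=H$ is fine. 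But the step you label the ``technical heart'' --- promoting $d_S(x)\geq 2$ to $d_S(x)=3$ --- is not proved: you assert that ``one may rotate colours along a fan at $x$ and swap a Kempe chain'' without specifying the edge $e$, the fan, which chain is swapped, or why the recolouring terminates in a $\Delta$-colouring of $H$. This is precisely where the difficulty of the Chetwynd--Hilton result resides (their proof requires adjacency arguments strictly stronger than VAL), so deferring it to an unspecified fan/Kempe manoeuvre leaves a genuine gap rather than a routine omission. You correctly diagnose the gap yourself; as written, the necessity direction is an outline, not a proof.
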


\section{Main Results}
We now state our first theorem regarding the chromatic edge stability index.

\begin{theorem}\label{thm:class2}
Let $G$ be a graph of Class $2$. Then, 
${\mathrm{es}_{\chi^\prime}(G)\leqslant {\frac{t_{\Delta}-1}{2}}}$.
Furthermore, if $G$ is connected and $t_\Delta$ is odd, then  equality holds if and only if $G$ is obtained from $K_{2n+1}$ by removing $s$ independent edges for some integers $n$ and $s$, ${0 \leq s \leq n-1}$.
\end{theorem}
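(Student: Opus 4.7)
The plan is to establish the upper bound via Theorem~\ref{acyclic class 1} (Fournier) applied to a suitable sub-matching of $G_\Delta$, the ``if'' part of the equality by the standard parity lower bound on $\chi^\prime$, and the ``only if'' part by induction on $k:=(t_\Delta-1)/2$ using Theorem~\ref{core size 3} as the base case.

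\textbf{Upper bound.} Let $M$ be a maximum matching of $G_\Delta$. If $|M|<t_\Delta/2$, I would remove $M$ from $G$: the vertices of $G_\Delta$ uncovered by $M$ form an independent set of $G_\Delta$ by maximality, so the core of $G\setminus M$ is edgeless and hence acyclic, and Theorem~\ref{acyclic class 1} gives $\chi^\prime(G\setminus M)\le\Delta$. If $|M|=t_\Delta/2$, which forces $t_\Delta$ even, I would fix $e=uv\in M$ and remove $M\setminus\{e\}$ instead: then only $u,v$ remain at degree $\Delta$ in $G\setminus(M\setminus\{e\})$, and the new core is the single edge $uv$, again acyclic. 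In either case the number of edges removed is at most $(t_\Delta-1)/2$.

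\textbf{Equality, ``if''.} For $G = K_{2n+1}$ minus $s$ independent edges, $t_\Delta = 2(n-s)+1$ and the upper bound already gives $\mathrm{es}_{\chi^\prime}(G)\le n-s$. Conversely, because $|V(G)|=2n+1$ is odd, every color class of a proper edge coloring is a matching of size at most $n$, so $|E(G)|-|F|\le n\cdot\chi^\prime(G\setminus F)$. Imposing $\chi^\prime(G\setminus F)\le 2n$ then forces $|F|\ge |E(G)|-2n^2 = n-s$, giving the reverse inequality.

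\textbf{Equality, ``only if''.} I would induct on $k$. The base case $k=1$ is exactly Theorem~\ref{core size 3}. For $k\ge 2$, the crucial step is to remove a single edge $e\in E(G_\Delta)$ without disconnecting $G$: since $G$ is Class~$2$, the contrapositive of Theorem~\ref{acyclic class 1} produces a cycle in $G_\Delta$, and any edge $e$ on such a cycle is non-bridge in $G$. Setting $G^*:=G\setminus\{e\}$, routine checks give that $G^*$ is connected and Class~$2$ (else $\{e\}$ alone would mitigate $G$, contradicting $k\ge 2$), that $t_\Delta(G^*)=2(k-1)+1$ (only the endpoints of $e$ leave the core), and that $\mathrm{es}_{\chi^\prime}(G^*)=k-1$ (the upper bound gives $\le k-1$; any smaller mitigating set of $G^*$ together with $e$ would mitigate $G$ with fewer than $k$ edges). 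The inductive hypothesis then yields $G^*=K_{2n^*+1}\setminus M^*$ for a matching $M^*$ of size $s^*=n^*-(k-1)$. Because $e$ is a non-edge of $G^*$ whose endpoints both lie in $V(G^*)$, we must have $e\in M^*$; hence $G=G^*+e=K_{2n^*+1}\setminus(M^*\setminus\{e\})$ is $K_{2n+1}$ minus an independent set of $s:=n^*-k$ edges, with $n:=n^*$, and the bounds $0\le s\le n-1$ follow from $k\ge 1$ together with the observation that $s^*=0$ is impossible (it would force $G^*=K_{2n^*+1}$, in which case $e$ could not be a non-edge of $G^*$). The main obstacle is precisely the connectedness issue in this inductive step; without the Fournier-based choice of $e$ on a cycle of $G_\Delta$, $G^*$ could disconnect and neither the inductive hypothesis nor Theorem~\ref{core size 3} would apply directly.
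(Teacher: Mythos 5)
Your proof is correct, and while your upper bound and the ``if'' direction of the equality case follow essentially the same lines as the paper's (Fournier's theorem to kill the core, plus the matching-size counting argument on subgraphs of $K_{2n+1}$), your ``only if'' direction takes a genuinely different route. The paper argues directly: it removes a maximum matching $M$ of $G_\Delta$ to force $\alpha'(G_\Delta)=\frac{t_\Delta-1}{2}$, then uses exchange arguments (replacing $uw\in M$ by $uv$, where $v$ is the unique unsaturated vertex) to show $G_\Delta\cong K_{t_\Delta}$, and finally deletes all but one edge of $M$ in a single step to obtain a connected Class 2 graph $H$ with $|H_\Delta|=3$, to which Theorem \ref{core size 3} applies. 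You instead induct on $k=\frac{t_\Delta-1}{2}$, peeling off one edge $e$ of a cycle in $G_\Delta$ at each step; the key points --- that $e$ lies on a cycle and hence is not a bridge, so $G^*=G\setminus\{e\}$ stays connected; that $G^*$ inherits Class 2 and satisfies $\mathrm{es}_{\chi'}(G^*)=k-1$, since a smaller mitigating set of $G^*$ together with $e$ would beat $\mathrm{es}_{\chi'}(G)$; and that $e$ must be one of the deleted matching edges of $G^*\cong K_{2n^*+1}$ minus $s^*$ independent edges, with $s^*\geq 1$ because $e$ is a non-edge of $G^*$ --- are all verified correctly. What your route buys is that you never need to establish $G_\Delta\cong K_{t_\Delta}$ explicitly, and the reduction to the case $|G_\Delta|=3$ happens one edge at a time rather than in one jump; the cost is the bookkeeping of the inductive invariants (connectivity, Class 2, $t_\Delta$, and the exact value of $\mathrm{es}_{\chi'}$), which you handle. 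Both proofs ultimately rest on the same two external results, Theorems \ref{acyclic class 1} and \ref{core size 3}.
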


\begin{proof}
Note that it is sufficient to remove a set of edges $F$ such that $|F|\leq \frac{t_{\Delta}-1}{2}$ and ${G\setminus F}$ is Class 1, since then we have ${\chi'(G\setminus F)=\Delta(G\setminus F)\leq\Delta(G)<\chi'(G)}$.

By Theorem \ref{acyclic class 1}, since $G$ is Class 2 it contains a cycle of vertices of degree $\Delta(G)$; removing an edge from that cycle reduces the number of vertices of degree $\Delta(G)$ by $2$. Continue in this manner until either the remaining graph is Class 1 or ${\lfloor\frac{t_{\Delta}-1}{2}}\rfloor$ edges have been removed. In the latter case the number of vertices of degree $\Delta(G)$ remaining is at most $2$, so there is no cycle of such vertices and Theorem \ref{acyclic class 1} implies the remaining graph is Class 1. Thus ${\mathrm{es}_{\chi^\prime}(G)\leqslant \frac{t_{\Delta}-1}{2}}$.

Now, we prove the second part of the theorem. Let 
$A$ be the set of vertices of maximum degree. First, assume that ${\mathrm{es}_{\chi'}(G) = \frac{t_{\Delta}-1}{2}}$. Consider a maximum matching $M$ in $G_\Delta$ and denote by $V_M$ the set of vertices saturated by $M$. Remove the edges of $M$ from $G$ and call the new graph $G'$. Since $t_\Delta$ is odd, $A \neq V_M$. Note that $G'_\Delta = G[A \setminus V_M]$ has no edges and by Theorem \ref{acyclic class 1} $G'$ is Class 1. Hence, ${\mathrm{es}_{\chi'}(G)} \leqslant \alpha'(G_\Delta)$.
Notice that we have
${\alpha'(G_\Delta) \leq \frac{t_\Delta - 1}{2}}$. By combining these two inequalities and our initial assumption, we obtain ${\alpha'(G_\Delta) = \frac{t_\Delta - 1}{2}}$. Hence, ${|A \setminus V_M| = 1}$. Suppose that ${A \setminus V_M = \{v\}}$. We claim that $v$ is adjacent to all vertices of $V_M$. By contradiction, assume ${uv \notin E(G_\Delta)}$ for some ${u \in V_M}$. Suppose $uw \in M$. Remove all edges in $M$ except $uw$ from $G$ and call the new graph $G''$.
Note that ${V(G''_{\Delta}) = \{u,v,w\}}$ and $G''_\Delta$ is a forest so by Theorem \ref{acyclic class 1}, $G''$ is Class 1. Hence, 
${\chi'(G'') = \Delta(G'') = \Delta(G) < \chi'(G)}$ and $G''$ is obtained by removing $\mathrm{es}_{\chi'}(G) - 1$ edges from $G$, a contradiction. 
Similarly, for every vertex ${u \in V_M}$, if $uw \in M$, by considering the matching $M \setminus \{uw\} \cup \{vw\}$, it can be proved that $u$ is adjacent to all vertices in $A$.
Therefore, 
${G_{\Delta} \cong K_{t_\Delta}}$. Now, remove $\alpha'(G_\Delta) - 1$ edges of $M$ from $G$ and call the remaining graph $H$. Since ${\mathrm{es}_{\chi'}(G) - 1}$ edges are removed from $G$, ${\chi'(H) = \chi'(G)}$. Note that ${\Delta(H) = \Delta(G)}$. Hence, we have
${\chi'(H) = \chi'(G) = \Delta(G) + 1 = \Delta(H) + 1}$ and $H$ is Class 2. Since 
$H_\Delta \cong K_3$ and $H$ is connected, by Theorem \ref{core size 3}, $H$ is obtained from $K_{2n+1}$ by removing $n-1$ independent edges, for some integer $n$. It is straightforward to see that $G$ is obtained from $K_{2n+1}$ by removing 
$s$ independent edges, where ${s = (n - 1) - (\alpha'(G_\Delta) - 1) = n - \alpha'(G_\Delta) = n - \mathrm{es}_{\chi'}(G) \leq n - 1}$.

Now, assume that $G$ is obtained from $K_{2n+1}$ by removing a matching $M$, where $|M|=s$ and ${0 \leq s \leq n-1}$. It is not hard to see that $G$ is Class 2. 
Note that ${\frac{t_\Delta - 1}{2} = \frac{2n + 1 - 2s - 1}{2} = n - s}$. Therefore, by the first part of the theorem, we have ${\mathrm{es}_{\chi'}(G) \leq n - s}$.
Now, we prove that ${\mathrm{es}_{\chi'}(G) \geq n - s}$. Suppose $\chi'(G)$ is reduced by removing at most $n-s-1$ edges. Then there is a subgraph $G'$ with $\chi'(G')=2n$ and $|E(G)|\leq |E(G')|+n-s-1$. Consider an edge-coloring of $G'$ with $2n$ colors. Each color class is a matching, so contains at most $n$ edges. Thus $E(G')\leq 2n^2$, giving
\[|E(G)|\leq 2n^2 + n - s - 1 = \binom{2n+1}{2} - s - 1 =|E(G)|-1,\]
a contradiction.  Hence, ${\mathrm{es}_{\chi'}(G) = n - s = \frac{t_\Delta - 1}{2}}$ and the proof is completed.
\end{proof}

\begin{remark}\label{equality-class2}
For every value of $t_\Delta \geq 3$, the upper bound ${\lfloor\frac{t_{\Delta}-1}{2}\rfloor}$ can be attained for $\mathrm{es}_{\chi'}(G)$. If $t_\Delta$ is odd, this follows from the equality case of Theorem \ref{thm:class2}. If ${t_\Delta=2k}$ is even, construct $G$ as follows: start from $K_{2k+1}$ and remove an edge $xy$, then add a new vertex $z$ and the edge $xz$. Note that ${\Delta(G)=2k}$ and there are $2k$ vertices of degree $2k$ (all except $y$ and $z$). In a proper edge-coloring of a subgraph of $G$, there are at most $k+1$ edges of the color used on $xz$, and at most $k$ edges of any other color. Thus any ${H\subset G}$ with ${\chi'(H)=2k}$ satisfies ${|E(H)|\leq 2k^2+1=|E(G)|-(k-1)}$, and so ${\mathrm{es}_{\chi^\prime}(G)\geqslant k-1}$.
\end{remark}

\begin{remark}
In contrast to Class 2, for every function $f(t_\Delta)$ we can find a connected graph $G$ of Class 1 such that ${\mathrm{es}_{\chi^\prime}(G)>f(t_\Delta)}$. Consider the Petersen graph, remove one of its edges, and call the remaining graph $Q$, as shown in Figure \ref{fig:Q}.

\begin{figure}[htp]\centering
%\vspace{1cm}
\begin{tikzpicture}[scale = 0.65, auto, swap]
\foreach \b in {1} {
      %\draw (\b+0,0)node{} -- (\b+4, 0)node{}; %deleted edge
      \draw (\b+0,2)node{} -- (\b+2, 2)node{};
      \draw (\b+2,2) -- (\b+4, 2)node{};
      \draw (\b+0,-2)node{} -- (\b+2, -2)node{};
      \draw (\b+2,-2) -- (\b+4, -2)node{};
      \draw (\b+1,1)node{} -- (\b+3, 1)node{};
      \draw (\b+0,0)node{} -- (\b+0,2);
      \draw (\b+0,0) -- (\b+0,-2);
      \draw (\b+4,0)node{} -- (\b+4,2);
      \draw (\b+4,0) -- (\b+4,-2);
      \draw (\b+2,2) -- (\b+2,-2);
      \draw (\b+0,2) -- (\b+4,-2);
      \draw (\b+0,-2) -- (\b+4,2);
  };
\end{tikzpicture}
\caption{The graph $Q\cong P\setminus\{e\}$.}\label{fig:Q}
\end{figure}
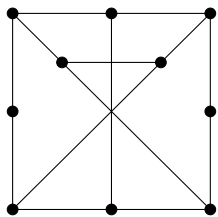

The graph $G$ illustrated in Figure \ref{fig:chain} is a chain of copies of $Q$ joined to a copy of $K_{1,3}$. Note that $\chi^\prime(G)=4$, $t_{\Delta}=1$ and $\Delta(G) = 4$. As was previously stated, one must remove at least one more edge from each copy of $Q$ in order to reduce the chromatic index. Therefore, one can arbitrarily increase $\mathrm{es}_{\chi^\prime}(G)$ by adding more copies of $Q$ to the chain.

\begin{figure}\centering
\begin{tikzpicture}[thick,scale=0.5]
  \draw (-1,0)node{} -- (0,0)node{};
  \draw (-1,1)node{} -- (0,0);
  \draw (-1,-1)node{} -- (0,0);
  
  \foreach \x in {1, 5, 12} {
    \node [black, draw=black, fill=white] at (\x+1.5, 0) {$\hspace{0.55cm} Q \hspace{0.55cm}$};
  };
  \draw (1,0) -- (0,0);
  \draw (5,0) -- (4,0);
  \draw (9,0) -- (8,0);
  \draw (11,0) -- (12,0);
   \node [black, draw=white, fill=white] at (10, 0) {\dots};
\end{tikzpicture}
\caption{A graph $G$ with $t_\Delta=1$ and $\mathrm{es}_{\chi'}(G)$ arbitrarily large.}\label{fig:chain}
\end{figure}
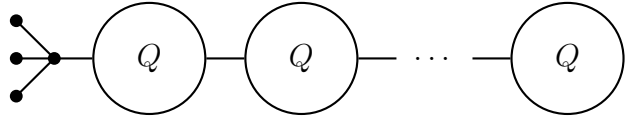
\end{remark}
In the next result, we provide an upper bound for the chromatic edge stability index in terms of $t_{\Delta-1}$ and $t_\Delta$.
\begin{theorem}\label{es for all graphs}
For every graph $G$, $\mathrm{es}_{\chi^\prime}(G)\leq t_\Delta+\frac{t_{\Delta-1}}{2}$. Furthermore, equality holds if and only if $t_{\Delta-1}=0$ and $G_\Delta$ has no edges.
\end{theorem}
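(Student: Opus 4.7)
The plan is first to dispatch the Class 2 case via Theorem~\ref{thm:class2}, which gives $\mathrm{es}_{\chi'}(G) \leq (t_\Delta-1)/2 \leq t_\Delta + t_{\Delta-1}/2$, and then to focus on the Class 1 case. For $G$ of Class 1 we have $\chi'(G)=\Delta$, and to make $\chi'(G\setminus F)<\chi'(G)$ it suffices, by Theorem~\ref{acyclic class 1}, to find $F\subseteq E(G)$ such that (i) every vertex of $A$ is incident to some edge of $F$ (forcing $\Delta(G\setminus F)\leq \Delta-1$) and (ii) the subgraph of $G\setminus F$ induced by its vertices of degree $\Delta-1$ is acyclic, because these two conditions make $G\setminus F$ a Class 1 graph with $\chi'(G\setminus F)=\Delta-1$.

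The construction proceeds component-by-component on $H:=G[A\cup B]$. Each $A$-vertex isolated in $H$ (i.e.\ with all $G$-neighbors of degree at most $\Delta-2$) contributes a single arbitrary incident edge to $F$. For each non-trivial component $C$ of $H$, set $a_C=|A\cap V(C)|$ and $b_C=|B\cap V(C)|$; the goal is to select $F_C\subseteq E(C)$ of at most $a_C+b_C/2$ edges that covers every $A$-vertex of $C$ and leaves the new core restricted to $V(C)$ acyclic. Summing these contributions yields $|F|\leq t_\Delta+t_{\Delta-1}/2$.

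The main obstacle is establishing the per-component bound. I would proceed by induction on $|V(C)|$. For a tree component, a leaf in $A$ forces the inclusion of its unique incident edge and reduces the problem to a smaller component; a leaf in $B$ may be removed, so that the inductive $F_{C'}$ on the smaller component serves as $F_C$ and the new leaf extends the acyclic new core by at most a pendant edge. For a component containing a cycle, one can remove an edge of a cycle without disconnecting $C$ and apply induction; the delicate point is to argue that the restored edge can either be reinstated without creating a new core cycle or absorbed within the budget using a $B$-vertex endpoint. The arithmetic works because each matching edge within $G_\Delta$ covers two $A$-vertices at unit cost, while each $B$-vertex kept out of the new core contributes the $1/2$ of slack needed for cycle-breaking.

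For the equality claim, the ``if'' direction is immediate: when $t_{\Delta-1}=0$ and $G_\Delta$ has no edges, each edge of $G$ is incident to at most one $A$-vertex, so any mitigating set has at least $t_\Delta$ edges, matching the upper bound. For the ``only if'' direction, suppose $\mathrm{es}_{\chi'}(G)=t_\Delta+t_{\Delta-1}/2$; since this quantity is an integer, $t_{\Delta-1}$ must be even. If $G_\Delta$ contains an edge $uv$, then beginning the construction with $uv$ covers two $A$-vertices with one edge, yielding $|F|\leq t_\Delta-1<t_\Delta+t_{\Delta-1}/2$, a contradiction. If $t_{\Delta-1}\geq 2$, a refined construction using a $B$-vertex to absorb an edge that would otherwise be spent on an uncovered $A$-vertex produces a strictly smaller $F$, again a contradiction. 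Therefore equality forces both $t_{\Delta-1}=0$ and $G_\Delta$ edgeless.
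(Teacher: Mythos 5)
Your high-level strategy --- cover every vertex of degree $\Delta$ while keeping the subgraph induced by the degree-$(\Delta-1)$ vertices of $G\setminus F$ acyclic, with a component-by-component accounting on $G[A\cup B]$ --- is reasonable and genuinely different from the paper's argument, which runs a single global induction on $t_\Delta$ with the strengthened hypothesis that at most $t+\lfloor\frac{s-1}{2}\rfloor$ removals suffice, where $s$ counts the degree-$(d-1)$ vertices having a neighbor of degree at least $d-1$, together with a fixed priority for which edge to delete next (an edge inside the core if one exists, else an edge from a maximum-degree vertex to a degree-$(d-1)$ neighbor, else an arbitrary edge). However, your write-up has a genuine gap exactly where the work lies: the per-component bound $|F_C|\leq a_C+b_C/2$ is never proved. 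The induction you sketch is not well-posed, because once an edge is deleted (for instance the forced edge at an $A$-leaf whose neighbor lies in $B$), the degrees of the surviving vertices change, so the ``component'' you recurse on is no longer the induced subgraph of $G[A\cup B]$ for the original degree thresholds; your induction hypothesis would have to track which vertices have already lost an edge, which is precisely the role played by the paper's parameter $s$ and its case analysis $s'=s$, $s'\leq s$, $s'\leq s+2$. More seriously, you explicitly flag the cyclic case as ``the delicate point'' and leave it unresolved: after removing a cycle edge and recursing there is no argument that the budget absorbs the extra removal, and this is exactly the situation (e.g.\ $K_4\subseteq G_\Delta$, where a covering matching leaves a $4$-cycle in the new core and a third edge is needed) where a naive count fails.

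The ``only if'' half of the equality statement inherits the same gap: the claim that an edge of $G_\Delta$, or $t_{\Delta-1}\geq 2$, lets the construction save a full edge presupposes that the remainder of the construction still meets its budget, so it again rests on the unproved per-component lemma (and the displayed inequality $|F|\leq t_\Delta-1$ should read $|F|\leq t_\Delta-1+t_{\Delta-1}/2$, which still suffices). The ``if'' direction and the reduction of the Class~2 case via Theorem~\ref{thm:class2} are fine. To repair the proof you would either need to formulate and prove the per-component lemma with a hypothesis robust under edge deletion --- at which point you have essentially reconstructed the paper's induction --- or invoke a strengthened global statement such as the paper's bound $t+\lfloor\frac{s-1}{2}\rfloor$.
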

\begin{proof}
We prove the following slightly stronger statement by induction. Let $H$ be a graph with maximum degree at most $d$. 
Let $t$ be the number of vertices of degree $d$, and let $s$ be the number of vertices of degree $d-1$ that have a neighbor of degree at least $d-1$. Then we may remove at most $k$ edges from $H$ to leave a graph of chromatic index at most $d-1$, where $k=t$ if $s=0$ and the vertices of degree $d$ form an independent set, and $k=t+\lfloor\frac{s-1}{2}\rfloor$ otherwise.

We prove this by induction on $t$. If $t=0$ and $s=0$, the statement follows immediately from Theorem \ref{Vizing}. If $t=0$ and $s\geq 1$ then $H$ has maximum degree $d-1$ and it suffices to remove $\lfloor\frac{s-1}{2}\rfloor$ edges to leave a graph of Class 1. We do this as in the proof of Theorem \ref{thm:class2}: remove edges between vertices of degree $d-1$ until at most one such edge remains, leaving a graph of Class 1 by Theorem \ref{acyclic class 1}. All such edges lie within the set $A$ of vertices of degree $d-1$ meeting another vertex of degree $d-1$, initially $|A|=s$, and each edge removed reduces $|A|$ by $2$, so after at most $\lfloor\frac{s-1}{2}\rfloor$ edges are removed we have $|A|\leq 2$ and so $H[A]$ has at most one edge, as required.

Now suppose $t>0$. If there is an edge $uv$ between two vertices of degree $d$, remove that edge. If not, let $w$ be an arbitrary vertex of degree $d$. If $w$ has a neighbor $x$ of degree $d-1$, remove the edge $wx$. Otherwise, choose an arbitrary neighbor $y$ and remove the edge $wy$. In each case we have removed one edge to leave a graph $H'$; let $t'$ be the number of vertices of $H'$ having degree $d$, and $s'$ be the number of vertices of degree $d-1$ that are adjacent to some other vertex of degree at least $d-1$. We now apply the induction hypothesis to $H'$.

In the first case, we have $t'=t-2$ and $s'\leq s+2$. We may therefore remove at most $t-2+\lfloor\frac{s+1}{2}\rfloor$ edges from $H'$ to leave a graph $H''$ with $\chi'(H'')\leq d-1$; together with the edge $uv$, we have removed at most $t+\lfloor\frac{s-1}{2}\rfloor$ edges from $H$ to obtain $H''$. 

In the second case, we have $t'=t-1$ and $s'\leq s$, since the set of vertices of degree $d-1$ with a neighbor of degree at least $d-1$ may gain $w$, but must lose $x$. We may therefore remove at most $k'$ edges from $H'$ to leave a graph $H''$ with $\chi'(H'')\leq d-1$, where $k'=t'=t-1$ if $s'=0$ and $k'=t'+\lfloor\frac{s'-1}{2}\rfloor$ if $s'\geq 1$. It follows that $k'\leq k-1$ and so we have removed at most $k$ edges from $H$ to obtain $H''$. 

In the final case, we have $t'=t-1$ and $s'=s$, since $d_{H'}(w)=d-1$ but $w$ has no neighbor of degree at least $d-1$, so is not counted in $s'$. Thus we may remove at most $k'=k-1$ edges from $H'$ to leave a graph $H''$ with $\chi'(H'')\leq d-1$, as required.

This completes the proof of the stronger statement. The required upper bounds follow by taking $H=G$ and $d=\Delta$, since $\chi'(G)\geq\Delta$. Note that we obtain the stronger bound of $t_\Delta+\frac{t_{\Delta-1}-1}{2}$ unless $t_{\Delta-1}=s=0$ and $G_\Delta$ has no edges.

It remains to show that equality holds if $t_{\Delta-1}=0$ and $G_\Delta$ has no edges. In this case, by Theorem \ref{acyclic class 1}, we have $\chi'(G)=\Delta$. 
In order to reduce the chromatic index to $\Delta-1$, we must remove at least one edge meeting every vertex of degree $\Delta$. Since no edge meets more than one of these vertices, we have $\mathrm{es}_{\chi'}(G)\geq t_\Delta$, as required.
\end{proof}

\begin{remark}
Theorem \ref{es for all graphs} gives $\mathrm{es}_{\chi'}(G)\leq t_\Delta+\lfloor\frac{t_{\Delta-1}-1}{2}\rfloor$ whenever $t_{\Delta-1}\geq 1$. In fact this is sharp for any values of $t=t_\Delta\geq 1$ and $s=t_{\Delta-1}\geq 1$. To see this (for $s\geq 3$), let $H_1$ be a Class 2 graph having $s$ vertices of maximum degree, such that $\mathrm{es}_{\chi'}(H_1)={\lfloor\frac{s-1}{2}\rfloor}$ (see Remark \ref{equality-class2}). Let $H_2$ be a graph with $t$ vertices of degree $\Delta(H_2)=\Delta(H_1)+1$, forming an independent set, and none of degree $\Delta(H_1)$ (for example, a disjoint union of stars). Then the disjoint union $G=H_1\cup H_2$ has $\chi'(G)=\Delta(G)$, $t_\Delta=t$, $t_{\Delta-1}=s$, and at least $t$ edges from $H_2$ and $\lfloor\frac{s-1}{2}\rfloor$ edges from $H_2$ must be removed to reduce its chromatic index. If $s\leq 2$ then the bound reduces to $t_\Delta$, so the graph $H_1$ should be omitted (and $\Delta(H_2)$ may be chosen arbitrarily).
\end{remark}

Now, we provide a formula for the chromatic edge stability index of bipartite graphs in terms of $t_{\Delta}$ and the matching number of $G_\Delta$.

\begin{theorem}
For every bipartite graph $G$, $\mathrm{es}_{\chi^\prime}(G)=t_\Delta-\alpha^\prime(G_\Delta)$.
\end{theorem}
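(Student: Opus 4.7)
The plan is to prove matching upper and lower bounds. The key observation is that because $G$ is bipartite, König's theorem gives $\chi'(G) = \Delta(G)$ and, more importantly, $\chi'(G\setminus F) \leq \Delta - 1$ for a subgraph $G\setminus F$ if and only if $\Delta(G\setminus F) \leq \Delta - 1$. Thus $F$ is a mitigating set if and only if $F$ covers every vertex of $A := V(G_\Delta)$, i.e., every vertex of maximum degree has at least one incident edge in $F$. So the problem reduces to finding the minimum number of edges of $G$ needed to cover $A$.

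For the upper bound, I would fix a maximum matching $M$ in $G_\Delta$ and let $U = A \setminus V(M)$, so $|U| = t_\Delta - 2\alpha'(G_\Delta)$. For each $u \in U$, choose an arbitrary edge $e_u$ of $G$ incident to $u$ (such an edge exists since $d_G(u) = \Delta \geq 1$). Take $F = M \cup \{e_u : u \in U\}$. Since $U$ is independent in $G$ (otherwise $M$ would not be maximum), the edges $e_u$ are pairwise distinct and also disjoint from $M$, so $|F| = \alpha'(G_\Delta) + (t_\Delta - 2\alpha'(G_\Delta)) = t_\Delta - \alpha'(G_\Delta)$. Every vertex of $A$ has at least one incident edge in $F$, so $\Delta(G\setminus F) \leq \Delta - 1$, yielding $\chi'(G\setminus F) \leq \Delta - 1$ by König.

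For the lower bound, I would exhibit an independent set in $G_\Delta$ of size $t_\Delta - \alpha'(G_\Delta)$ and observe that any edge cover of $A$ has at least that many edges. Since $G_\Delta$ is bipartite, König's theorem gives a vertex cover $C$ of $G_\Delta$ of size $\alpha'(G_\Delta)$. Then $U' := A \setminus C$ is an independent set in $G_\Delta$ of cardinality $t_\Delta - \alpha'(G_\Delta)$, and vertices in $U'$ are pairwise non-adjacent in $G$ as well (any edge between two of them would be in $G_\Delta$). Consequently no single edge of $G$ is incident to two vertices of $U'$, and any mitigating set, which must cover $U'$, has size at least $|U'| = t_\Delta - \alpha'(G_\Delta)$.

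There is no real obstacle; the only point that requires care is the verification that the edges $e_u$ chosen for the upper bound are all distinct from each other and from the edges of $M$, which follows cleanly from the independence of $U$ in $G_\Delta$ (guaranteed by the maximality of $M$). The role of bipartiteness is twofold: it supplies König's theorem in $G_\Delta$ to produce the large independent subset $U'$ needed for the lower bound, and it allows us to translate the degree-reduction condition $\Delta(G\setminus F) \leq \Delta-1$ into the chromatic-index reduction $\chi'(G\setminus F) \leq \Delta - 1$ for the upper bound.
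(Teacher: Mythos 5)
Your proof is correct. The upper bound is exactly the paper's: delete a maximum matching $M$ of $G_\Delta$ together with one further edge at each $M$-unsaturated vertex of maximum degree (your extra check that these edges are pairwise distinct is harmless but not needed, since the count is only an upper bound on the number of deleted edges). Your lower bound, however, takes a genuinely different route. The paper works directly with a minimum mitigating set $S'$: it shows by exchange arguments that $G[S']$ is acyclic and contains no path of length $3$, hence is a disjoint union of stars, and then counts the vertices of degree $\Delta(G)$ component by component, extracting a matching of $G_\Delta$ from those star components all of whose vertices have maximum degree. You instead reduce the whole problem, via K\H{o}nig's edge-coloring theorem, to the statement that $F$ is mitigating if and only if $F$ covers every vertex of $A=V(G_\Delta)$, and then bound the minimum such edge cover from below by the maximum size of a subset of $A$ that is independent in $G$; K\H{o}nig's vertex-cover theorem applied to the bipartite graph $G_\Delta$ supplies such a set of size $t_\Delta-\alpha'(G_\Delta)$, namely $A\setminus C$ for a minimum vertex cover $C$ of $G_\Delta$, which is independent in $G$ because any edge of $G$ joining two vertices of degree $\Delta(G)$ lies in $G_\Delta$. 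Both arguments are sound; yours is shorter and replaces the explicit structural analysis by matching--cover duality, while the paper's version yields additional information about what every minimum mitigating set looks like (a disjoint union of stars meeting the degree conditions), which fits the theme of its final section on the structure of minimum mitigating sets.
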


\begin{proof}
First, we prove that ${\mathrm{es}_{\chi^\prime}(G) \geq t_\Delta-\alpha^\prime(G_\Delta)}$. Assume that $S'$ is the smallest mitigating set of $G$. Note that since $G$ is bipartite, it is Class 1 \cite{K}. Hence, in order to reduce $\chi^\prime(G)$, it is necessary and sufficient that there are no vertices of degree $\Delta(G)$ left in the remaining graph.
We now state two claims:
\\
\textbf{Claim 1.}
$G[S']$ is acyclic. 
By contradiction, assume that $G[S']$ contains a cycle $C$. Consider an edge $e = uv$ in $C$ and let $G^\prime$ be the graph obtained by removing the edges of the path $C-e$ from $G$. Note that $d_{G^\prime}(u)$ and $d_{G^\prime}(v)$ do not exceed $\Delta(G)-1$. Hence, by removing $S' - e$ from $G$, $\chi'(G)$ is reduced, a contradiction.
\\
\textbf{Claim 2.}
$G[S']$ contains no path of length 3.
Similarly to the previous claim, if $G[S']$ contains a path of length 3, by removing the middle edge of such a path from $S'$, we get a subset of the edges of size $|S'|-1$ whose removal reduces $\chi^\prime(G)$, a contradiction.

By the claims above, one can conclude that each component of $G[S']$ is the graph $K_{1,r}$ for some $r \geq 1$.

Assume that the components of $G[S']$ are ${S_1, \ldots, S_k}$. If $S_i$ is a component with at least two edges and $l$ is a leaf of $S_i$, then ${d_G(l) = \Delta(G)}$, because otherwise by removing the edge incident with $l$ from $S'$, we get a subset of edges of size $|S'|-1$ whose removal reduces $\chi^\prime(G)$, a contradiction. By a similar argument, if $S_i$ is a single edge, at least one of its endpoints has degree $\Delta(G)$ in $G$. Now, assume that in exactly $r$ components of $G[S']$, all vertices have degree $\Delta(G)$ in $G$. Hence, if we take one edge of each of these components, we get a matching of size $r$ in $G_\Delta$. Therefore, ${\alpha^\prime(G_\Delta) \geq r}$. Note that $G[S']$ contains all vertices of degree $\Delta(G)$ in $G$. Since $t_\Delta=\sum_{i=1}^{k}{|E(S_i)|} +r$, we have the following:
\[\mathrm{es}_{\chi^\prime}(G)=|S'| = \sum_{i=1}^{k}{|E(S_i)|} = t_\Delta - r \geq t_\Delta - \alpha^\prime(G_\Delta).\]

Now, we prove that ${\mathrm{es}_{\chi^\prime}(G) \leq t_\Delta-\alpha^\prime(G_\Delta)}$.
 Consider a maximum matching $M$ in $G_\Delta$. There are $t_\Delta - 2|M|$ vertices of degree $\Delta(G)$ in $G$ which are not saturated by $M$. Let $G^\prime$ be the graph obtained by removing the edges of $M$ and one incident edge of each of these vertices from $G$. Notice that ${\Delta(G^\prime) \leq \Delta(G) - 1}$ and since $G^\prime$ is Class 1, ${\chi^\prime(G^\prime) = \Delta(G^\prime) < \Delta(G) = \chi^\prime(G)}$. Hence, we get the following: \[\mathrm{es}_{\chi^\prime}(G) \leq |M| + t_\Delta - 2|M| = t_\Delta - \alpha^\prime(G_\Delta).\]
 The proof is completed.
\end{proof}

\section{Structure of minimum mitigating sets}
The mitigating sets constructed in the previous section all have the property that they only contain edges meeting vertices of degree $\Delta(G)$ or $\Delta(G)-1$. It is natural to ask whether minimum-size mitigating sets (i.e.\ sets of $\mathrm{es}_{\chi'}(G)$ edges whose removal reduces the chromatic index) necessarily have this property. We will show that the answer is ``yes'' in the sense that every graph has a minimum-size mitigating set with this property, but ``no'' in the sense that some graphs also have minimum-size mitigating sets that do not satisfy this property.

\begin{theorem}Let $G$ be any graph. Then there exists a mitigating set $S$ for $G$ of size $\mathrm{es}_{\chi'}(G)$ such that if $e=uv\in S$ then
\begin{equation}\max\{d(u),d(v)\}\geq \Delta(G)-1.
	\label{high-degree}\end{equation}
\end{theorem}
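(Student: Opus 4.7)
The plan is to argue by contradiction. Among all minimum mitigating sets of $G$, choose one $S$ minimizing the number of \emph{bad} edges, where an edge $uv$ is bad if $\max(d_G(u),d_G(v))\leq\Delta(G)-2$; I aim to show that this minimum is $0$. Writing $k=\chi'(G)-1$ and supposing for contradiction that $S$ contains a bad edge $e=uv$, set $G'=G\setminus(S\setminus\{e\})$. By the minimality of $|S|$, $S\setminus\{e\}$ is not mitigating, so $\chi'(G')=k+1$; also $\chi'(G'\setminus\{e\})=\chi'(G\setminus S)\leq k$, so $e$ is a critical edge of $G'$. Since $d_G(u),d_G(v)\leq\Delta(G)-2\leq k-1$ and every other vertex $x$ satisfies $d_{G'}(x)\leq\Delta(G\setminus S)\leq k$ (the last inequality because $\chi'(G\setminus S)\leq k$), we get $\Delta(G')\leq k$; combined with $\chi'(G')=k+1$ this shows $G'$ is Class~$2$ with $\Delta(G')=k$, and neither $u$ nor $v$ is of maximum degree in $G'$.

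The heart of the proof is a Vizing-fan analysis of the critical edge $e$ in $G'$. Fix a proper $k$-edge-coloring $c$ of $G'\setminus\{e\}$ and grow a fan at $u$ rooted at $v_0=v$: while $v_i$ still has a missing color $\beta_i\in M_c(v_i)$, let $v_{i+1}$ be the neighbor of $u$ with $c(uv_{i+1})=\beta_i$. Such $v_{i+1}$ exists because $\beta_i\notin M_c(u)$: otherwise the standard fan rotation---recoloring $uv_j\mapsto \beta_j$ for $0\leq j\leq i$---would extend $c$ to a proper $k$-coloring of $G'$, contradicting $\chi'(G')=k+1$. By a standard Vizing-adjacency-style analysis (resolving any repetition of a vertex in the fan by a suitably chosen Kempe-chain swap on $c$ and then re-growing the fan), this construction must eventually produce a simple fan path $v_0,v_1,\ldots,v_\ell=w$ with $d_{G'}(w)=k$, that is, a fan leaf of maximum degree. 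Shifting colors along this path---recoloring $uv_j$ with $\beta_j$ for $0\leq j<\ell$ and leaving $uw$ uncolored---yields a proper $k$-edge-coloring of $G'\setminus\{uw\}$, so $uw$ is also critical in $G'$.

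Because $\Delta(G)\leq k+1$ and $d_G(w)\geq d_{G'}(w)=k$, we have $d_G(w)\geq\Delta(G)-1$, so the edge $uw$ satisfies \eqref{high-degree}. Moreover $uw\in E(G')=E(G)\setminus(S\setminus\{e\})$ and $uw\neq e$, so $uw\notin S$, and thus $S'=(S\setminus\{e\})\cup\{uw\}$ has $|S'|=|S|$ while $G\setminus S'=G'\setminus\{uw\}$ has chromatic index at most $k$. Hence $S'$ is a minimum mitigating set of $G$ with strictly fewer bad edges than $S$, contradicting the choice of $S$.

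The main technical obstacle is the fan-termination claim: in the generic case in which the fan never revisits a vertex, reaching a maximum-degree leaf is immediate from the finiteness of $N_{G'}(u)$, but in the presence of fan collisions one must perform a Kempe-chain swap on $c$ to produce a new base coloring before continuing the fan. This is the standard but delicate argument used to prove Vizing's Adjacency Lemma, here adapted to the weaker setting in which only the single edge $e$, rather than every edge of $G'$, is assumed critical.
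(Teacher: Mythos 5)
Your overall strategy coincides with the paper's: among minimum mitigating sets choose one minimizing the number of bad edges, and use a Vizing fan at $u$ to exchange a bad edge $uv$ for an edge $uw$ with $d_{G'}(w)=k$, strictly decreasing the count of bad edges. The framing is sound: your $G'=G\setminus(S\setminus\{e\})$ is the paper's $G\setminus T$ with $uv$ added back, the verification that $\chi'(G')=k+1$, $\Delta(G')=k$ and that $e$ is critical in $G'$ is correct, and the final exchange $S'=(S\setminus\{e\})\cup\{uw\}$ works exactly as you say. However, the one claim that carries the whole proof --- that the fan can be grown to a simple fan ending at a neighbor $w$ with $d_{G'}(w)=k$ such that $uw$ is again critical --- is asserted rather than proved, and you yourself flag it as the main obstacle. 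The mechanism you sketch for handling fan collisions (``a suitably chosen Kempe-chain swap on $c$ and then re-growing the fan'') is not the right picture: in the standard analysis a repeated missing color leads immediately to an extension of the coloring (hence a contradiction), not to a repaired fan, and it is not clear that your ``swap and re-grow'' procedure terminates. As written, the proof is incomplete precisely at its technical core.

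The paper closes this gap by showing that collisions simply cannot occur. Let $c$ be a color missing at $u$ and $c_i$ a color missing at the fan vertex $v_i$. If there were no $cc_i$-alternating path from $u$ to $v_i$, then swapping colors on the maximal $cc_i$-path containing $u$ and then recoloring $uv_j$ with $c_j$ for all $j\leq i$ would yield a proper $k$-edge-coloring of all of $G'$, contradicting $\chi'(G')=k+1$; hence such a path exists for every $i$. Since $c$ is missing at $u$, for each fixed color $c'$ the maximal $cc'$-alternating path starting at $u$ is unique, so it cannot terminate at two distinct vertices; therefore the colors $c_0,c_1,\dots$ are pairwise distinct, the fan vertices $v_{i+1}$ (defined by $c(uv_{i+1})=c_i$) are pairwise distinct, and the fan only stops when some $v_r$ has no missing color, i.e.\ $d_{G'}(v_r)\geq k$. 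Uncoloring $uv_r$ and downshifting the fan then $k$-colors $G'\setminus\{uv_r\}$, which is exactly the criticality of $uw=uv_r$ that your exchange step needs. Replacing your appeal to a ``standard but delicate'' adaptation of Vizing's Adjacency Lemma with this explicit argument would complete the proof.
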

\begin{proof}
Let $T$ be any mitigating set of $G$ of size $\mathrm{es}_{\chi'}(G)$, and suppose that $T$ contains an edge $e=uv$ which does not satisfy \eqref{high-degree}. We will adapt the proof of Vizing's theorem to show that there is some edge $uv'$ with $d(v')\geq \Delta(G)-1$, such that $T\cup\{uv'\}\setminus\{uv\}$ is also a mitigating set for $G$. By a sequence of such replacements, we eventually obtain a set $S$ with the required properties.

Write $G'=G\setminus T$. Then, by minimality of $T$, $\chi'(G')=\chi'(G)-1$ but $\chi'(G'\cup\{uv\})=\chi'(G)$. Write $k$ for $\chi'(G')$; note that $k\geq\Delta(G)-1$. Fix a $k$-edge-coloring of $G'$. For any two colors $c_1$ and $c_2$, we say that a path is a \emph{$c_1c_2$-path} if its colors alternate between $c_1$ and $c_2$ (starting with either color).

We will construct a sequence $v=v_0,v_1,\ldots,v_r$ of neighbors of $u$ such that $d_G(v_r)\geq d_{G'}(v_r)\geq k$. Since $d_{G'}(u)<d_G(u)<k$, there is a color $c$ which is not used at $u$, and similarly there is a colour $c_0$ which is not used at $v_0$. However, $c_0$ must be used at $u$, and $c$ must be used at $v_0$, since otherwise we could color $uv_0$ with one of these colors to obtain a $k$-edge-coloring of $G'\cup\{uv\}$. Also, there must be a $cc_0$-path from $u$ to $v_0$, since otherwise we could swap colours in the $cc_0$ path meeting $u$ and then color $uv_0$ with $c_0$ to obtain a $k$-edge-coloring of $G'\cup\{uv\}$. 
Choose the neighbor $v_1$ such that $uv_1$ has color $c_0$. 

Suppose we have constructed a sequence $v_0,\ldots,v_i$ of distinct neighbors of $u$, and a sequence of colors $c_0,\ldots,c_{i-1}$ such that for each $j<i$ the color $c$ is used at $v_j$, the color $c_j$ is not used at $v_j$, there is a $cc_j$-path from $u$ to $v_j$, and $uv_{j+1}$ has color $c_j$. 

Suppose also that $d_{G'}(v_i)<k$. Then there is some color $c_i$ not used at $v_i$. The color $c_i$ must be used at $u$, and $c$ must be used at $v_i$, since otherwise we could recolor $uv_i$ with one of these colors, and give $uv_j$ color $c_j$ for each $j<i$ to obtain a $k$-edge-coloring of $G'\cup\{uv\}$. 

For each $i$, consider a maximal $cc_i$-path $P_i$ meeting $u$. Since $u$ does not meet $c$, the first edge of $P_i$ must be the unique edge of color $c_i$ meeting $u$, and each subsequent edge is uniquely determined since the coloring is proper and $P_i$ is maximal. Thus $P_i$ is uniquely determined. If $P_i$ does not meet $v_i$, then by swapping colors in $P_i$ and giving $uv_j$ color $c_j$ for each $j\leq i$ we obtain a $k$-edge-coloring of $G'\cup\{uv\}$. Thus $v_i$ lies on $P_i$, and since $v_i$ only uses one of the two colors it must be the end vertex of $P_i$. In particular, since $v_i\neq v_j$ it follows that $c_i\neq c_j$ for each $j<i$. Choose the neighbor $v_{i+1}$ such that $uv_{i+1}$ has color $c_i$; this ensures $v_{i+1}$ is distinct from $v_0,\ldots,v_i$. 

Thus the sequence can be extended until we have $d_{G'}(v_{r})\geq k\geq\Delta-1$. Now we may remove the edge $uv_{r}$ and give $uv_j$ color $c_j$ for each $j<r$, to obtain a $k$-edge-coloring of $G'\cup\{uv\}\setminus\{uv_{r}\}$. Thus $T\cup\{uv_{r}\}\setminus\{uv\}$ is also a mitigating set of $G$, but has fewer edges violating \eqref{high-degree}. By choosing $S$ to minimise the number of edges violating \eqref{high-degree}, we must have \eqref{high-degree} holding for all edges of $S$, as required.
\end{proof}
To complete this section, we show the following.
\begin{prop}There exists a graph $G$ having a mitigating set $S$ of size $\mathrm{es}_{\chi'}(G)$ and an edge $uv\in S$ which does not satisfy \eqref{high-degree}.
\end{prop}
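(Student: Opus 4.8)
The plan is to exhibit an explicit graph. Reformulating the statement, I want a graph $G$ with a minimum mitigating set $S$ containing an edge $uv$ having $d(u),d(v)\le\Delta(G)-2$. If $S$ is minimum then $G''\colon=G\setminus(S\setminus\{uv\})$ satisfies $\chi'(G'')=\chi'(G)$ while $\chi'(G''\setminus\{uv\})=\chi'(G)-1$; that is, $uv$ must be a \emph{critical} edge of a graph $G''$ of one higher class, both of whose endpoints have degree at most $\Delta-2$. The first thing to notice is that such an edge cannot arise from the obstruction used in Remark \ref{equality-class2}, where the vertices of degree $\Delta$ themselves induce an overfull subgraph: there every critical edge meets a vertex of degree $\Delta$. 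So the obstruction has to be genuinely global. The key observation is that a graph which is overfull \emph{as a whole}—counting its low-degree vertices too—is still Class $2$, and then its critical edges are free to avoid the vertices of maximum degree.

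Concretely, I would first construct a graph $G'$ with $\Delta(G')=4$ in which a critical edge joins two vertices of degree $3$. Take $K_5$ on $\{a,b,c,d,e\}$, delete the two disjoint edges $ab$ and $cd$, and add two new vertices $u,v$ with $u$ adjacent to $a,b$ and $v$ adjacent to $c,d$, together with the edge $uv$. Then $a,b,c,d,e$ have degree $4$ and $u,v$ have degree $3$, so $\Delta(G')=4$; moreover $G'$ has $7$ vertices and $13>4\cdot 3$ edges, so it is overfull and hence Class $2$ with $\chi'(G')=5$. On the other hand $G'\setminus\{uv\}$ has a proper $4$-edge-colouring (a direct check: since the five degree-$4$ vertices force the four edges $ua,ub,vc,vd$ to carry four distinct colours, one completes the colouring on the surviving $K_5$-edges). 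Thus $\chi'(G'\setminus\{uv\})=4$, and $uv$ is the desired critical edge of $G'$; note that at this stage $uv$ still satisfies \eqref{high-degree}, since $3=\Delta(G')-1$.

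Finally I would raise the maximum degree so that degree $3$ drops strictly below $\Delta-1$. Let $G=G'\sqcup K_{1,5}$ (the graph need not be connected). Now $\Delta(G)=5$ and $\chi'(G)=\max\{5,5\}=5$. Reducing the chromatic index to $4$ forces both components down to $\chi'\le 4$: the star $K_{1,5}$ needs at least one edge removed to lower its centre's degree, and $G'$ needs at least one edge removed because it is overfull, so $\mathrm{es}_{\chi'}(G)\ge 2$; removing $uv$ together with any one edge of the star achieves $\chi'=4$, so $\mathrm{es}_{\chi'}(G)=2$. Hence $S=\{uv,\text{ a star edge}\}$ is a minimum mitigating set, and its edge $uv$ has $\max(d(u),d(v))=3=\Delta(G)-2$, so $S$ fails \eqref{high-degree} as required.

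The substantive obstacle is the middle step. One must recognise that the critical low-degree edge has to come from a global overfull obstruction rather than from a clique of maximum-degree vertices; in particular a short argument via Vizing's Adjacency Lemma shows that no such example can have $\Delta\le 4$, which is why the degree-lifting to $\Delta=5$ is essential and the construction cannot be made trivially small. Once $G'$ is in hand, confirming $\chi'(G'\setminus\{uv\})=4$ is a finite $4$-edge-colouring check, and the lifting by a star together with the additivity of $\mathrm{es}_{\chi'}$ over disjoint components is routine.
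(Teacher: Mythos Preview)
Your construction is correct and gives a valid proof. I verified that your $G'$ (seven vertices, thirteen edges, $\Delta=4$) is overfull and hence Class~2, and that $G'\setminus\{uv\}$ admits a proper $4$-edge-colouring (for instance $ea{=}1,\,eb{=}2,\,ec{=}3,\,ed{=}4,\,ac{=}2,\,ad{=}3,\,bc{=}4,\,bd{=}1,\,au{=}4,\,bu{=}3,\,cv{=}1,\,dv{=}2$). Adjoining a disjoint $K_{1,5}$ then forces $\mathrm{es}_{\chi'}(G)=2$ exactly as you argue, and $uv$ has both endpoints of degree $3=\Delta(G)-2$.

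Your route is genuinely different from the paper's. The paper builds a \emph{connected} nine-vertex graph $G$ with $\Delta=5$: a bipartite chain $u$--$A$--$B$--$C$--$v$ with $|A|=|C|=2$, $|B|=3$, plus the edges $uv$ and $xy$ (any missing edge inside $A\cup B\cup C$). There the obstruction to a single-edge mitigating set is not overfullness but a colour-propagation argument: in any $4$-edge-colouring of $G\setminus\{xy\}$, counting how often each colour is used across the successive bipartitions forces the two colours at $u$ and the two colours at $v$ to be complementary, so $uv$ cannot be added. Your approach trades this parity argument for the one-line overfull certificate, at the cost of connectedness, and then outsources the degree-raising to a disjoint star rather than baking it into the same component. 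Both constructions sit at $\Delta=5$ with $\mathrm{es}_{\chi'}=2$; yours is quicker to verify, while the paper's yields a connected example and extends to an infinite family by taking $|A|=|C|=k$, $|B|=2k-1$.

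Two small remarks. First, your parenthetical that ``the five degree-$4$ vertices force $ua,ub,vc,vd$ to carry four distinct colours'' is stronger than you need; the explicit colouring above suffices, and you might simply exhibit it. Second, the aside about Vizing's Adjacency Lemma and $\Delta\le 4$ is motivational rather than part of the argument, and could be trimmed without loss.
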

\begin{proof}Construct a nine-vertex graph $G'$ as follows. Let $V(G')=A\cup B\cup C\cup\{u,v\}$, with $|A|=|C|=2$ and $|B|=3$. Add all edges between $u$ and $A$, between $A$ and $B$, between $B$ and $C$, and between $C$ and $v$. Now define a graph $G$ by adding two edges to $G'$: the edges $uv$ and $xy$, where $x$ and $y$ may be any two vertices in $A\cup B\cup C$ that are not adjacent in $G'$. (There are several possible choices for the second edge, giving three non-isomorphic possibilities for $G$.)

\begin{figure}[ht]\centering%
	\begin{tikzpicture}[thick]
		\filldraw (0,2) circle (0.05) node[draw=none,fill=none,rectangle,inner sep=5pt,anchor=north] {$u$};
		\filldraw (8,2) circle (0.05) node[draw=none,fill=none,rectangle,inner sep=5pt,anchor=north] {$v$};
		\foreach \x in {.5,3.5}{
			\filldraw (2,\x) circle (0.05);
			\filldraw (6,\x) circle (0.05);
		}
		\foreach \x in {.5,2,3.5}{
			\filldraw (4,\x) circle (0.05);
		}
		\draw (0,2) -- node[pos=0.5,fill=white,draw=white,rectangle] {$1$} (2,3.5);
		\draw (0,2) -- node[pos=0.5,fill=white,draw=white,rectangle] {$2$} (2,.5);
		\draw (2,3.5) -- node[pos=0.5,fill=white,draw=white,rectangle] {$2$} (4,3.5);
		\draw (2,3.5) -- node[pos=0.5,fill=white,draw=white,rectangle] {$3$} (4,2);
		\draw (2,3.5) -- node[pos=0.3,fill=white,draw=white,rectangle] {$4$} (4,.5);
		\draw (2,.5) -- node[pos=0.3,fill=white,draw=white,rectangle] {$1$} (4,3.5);
		\draw (2,.5) -- node[pos=0.5,fill=white,draw=white,rectangle] {$4$} (4,2);
		\draw (2,.5) -- node[pos=0.5,fill=white,draw=white,rectangle] {$3$} (4,.5);
		\draw (8,2) -- node[pos=0.5,fill=white,draw=white,rectangle] {$3$} (6,3.5);
		\draw (8,2) -- node[pos=0.5,fill=white,draw=white,rectangle] {$4$} (6,.5);
		\draw (6,3.5) -- node[pos=0.5,fill=white,draw=white,rectangle] {$4$} (4,3.5);
		\draw (6,3.5) -- node[pos=0.5,fill=white,draw=white,rectangle] {$1$} (4,2);
		\draw (6,3.5) -- node[pos=0.3,fill=white,draw=white,rectangle] {$2$} (4,.5);
		\draw (6,.5) -- node[pos=0.3,fill=white,draw=white,rectangle] {$3$} (4,3.5);
		\draw (6,.5) -- node[pos=0.5,fill=white,draw=white,rectangle] {$2$} (4,2);
		\draw (6,.5) -- node[pos=0.5,fill=white,draw=white,rectangle] {$1$} (4,.5);
		\node[draw=none,fill=none,rectangle,inner sep=5pt,anchor=south] at (4,3.5) {$x$};
		\node[draw=none,fill=none,rectangle,inner sep=5pt,anchor=north] at (4,2) {$y$};
		\draw[dashed] (4,3.5) -- (4,2);
		\draw[dashed] (0,2) to [out=90,in=180] (4,4.5) to [out=0,in=90] (8,2);
		\draw[decoration={brace,mirror,amplitude=3pt}, decorate] (1.5,0) -- (2.5,0) node [draw=none,fill=none,pos=0.5,anchor=north,yshift=-5pt] {$A$};
		\draw[decoration={brace,mirror,amplitude=3pt}, decorate] (3.5,0) -- (4.5,0) node [draw=none,fill=none,pos=0.5,anchor=north,yshift=-5pt] {$B$};
		\draw[decoration={brace,mirror,amplitude=3pt}, decorate] (5.5,0) -- (6.5,0) node [draw=none,fill=none,pos=0.5,anchor=north,yshift=-5pt] {$C$};
	\end{tikzpicture}
	\caption{The graphs $G'$ (solid edges) and one choice of $G$ (solid and dashed edges), with a $4$-edge-coloring of $G$.}\label{fig:counter}
\end{figure}
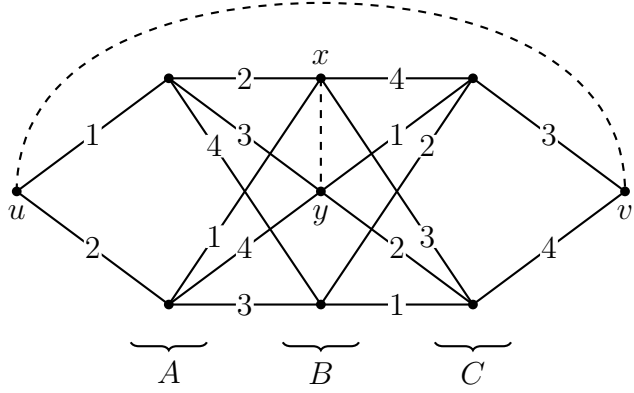

Figure \ref{fig:counter} shows a $4$-edge-colouring for $G'$. Since $\Delta(G)=5$ we clearly have $\chi'(G)\geq 5$, and indeed the colouring shown may be extended to a $5$-edge-colouring of $G$ by using a fifth color on the two extra edges.

The set $S=\{uv,xy\}$ thus shows that $\mathrm{es}_{\chi'}(G)\leq 2$, and since $uv$ does not satisfy \eqref{high-degree}, it is sufficient to show that $S$ is a minimum mitigating set, i.e.\ $\mathrm{es}_{\chi'}(G)=2$.

Suppose that this is not the case, so that $\chi'(G\setminus\{e\})=4$ for some $e\in E(G)$. In order for this to happen, certainly we need $\Delta(G\setminus\{e\})\leq 4$, and so we must have $e=xy$. Therefore $G\setminus\{e\}=G'\cup\{uv\}$ would have to be $4$-edge-colorable. Take any $4$-edge-coloring of $G'$ with colors $c_1,c_2,c_3,c_4$; let $c_1,c_2$ be the colors used at $u$. Since the vertices in $A$ have degree $4$, each color is used the same number of times on edges meeting $A$. Consequently $c_1$ and $c_2$ are used once on edges between $A$ and $B$, and $c_3$ and $c_4$ are used twice on edges between $A$ and $B$. Similarly, since each color must be used the same number of times on edges meeting $B$, $c_1$ and $c_2$ are used twice on edges between $B$ and $C$, and $c_3$ and $c_4$ are used once on edges between $B$ and $C$. Finally, since each color must be used the same number of times on edges meeting $C$, the colors used at $v$ must be $c_3$ and $c_4$. Thus there is no way to extend any $4$-edge-coloring of $G'$ to a $4$-edge-coloring of $G'\cup\{uv\}$, which completes the proof.
\end{proof}
\begin{remark}The same proof works with $|A|=|C|=k$ and $|B|=2k-1$ to give an infinite sequence of graphs with this property.\end{remark}
\section*{Acknowledgements}
The research of the first author was supported by grant number G981202 from Sharif University of Technology.
The second author was supported by the  European Research Council under the European Union's Horizon 2020 research and innovation programme (grant agreement no.\ 883810).
\end{sloppypar}

\end{document}